\theoremstyle{plain}
\numberwithin{equation}{section}
\newtheorem{thm}{Theorem}
\newtheorem{lemma}{Lemma}[section]
\newtheorem{theorem}{Theorem}[section]
\newtheorem{definition}[thm]{Definition}
\begin{document}

\setcounter{page}{1}

\title[Derivations and identities ]{Derivations and identities for Kravchuk polynomials}
\author{Leonid Bedratyuk}
\address{Department of Applied Mathematics \\
                Khmelnitsky National University\\
                Khmelnitsky, Instytutska ,11\\
                29016, Ukraine}
\email{leonid.uk@gmail.com}
%\thanks{Research supported in part by the Natural Sciences and Engineering Research Council of Canada and by Emperor Frederick II of Sicily.}

\begin{abstract}
We introduce the notion of Kravchuk derivations of the polynomial algebra.  We prove that any element of the kernel of the derivation gives a polynomial identity satisfied by the Kravchuk polynomials.  Also, we  prove that any kernel element of the  basic Weitzenb\"ok derivations yields a polynomial identity satisfied by the Kravchuk polynomials.  We describe the corresponding intertwining maps.

\end{abstract}

\maketitle

\begin{flushright}
 \textit{Dedicated to the 120th Anniversary of Mykhailo Kravchuk}
\end{flushright}

\section{Introduction}

The binary Kravchuk    polynomials $K_n(x,a)$  are defined by the following explicit formula
$$
K_n(x,a):=\sum_{i=0}^n (-1)^i {x \choose i} {a-x \choose n-i}, n=0,1,\ldots ,
$$
with the ordinary generating function
$$
\sum_{i=0}^{\infty} K_i(x,a) z^i=\left( 1+z \right) ^{a} \left( \frac{1-z}{1+z} \right) ^{x}.
$$
 We are interested in finding  polynomial identities satisfied by the  polynomials.  In other words, polynomials  
$P(x_0,x_1,\ldots,x_n)$ in  $n+1$ variables such that
$$
P(K_0(x,a),K_1(x,a),\ldots, K_n(x,a))=\varphi_1(a) \,\,\text{  or }  P(K_0(x,a),K_1(x,a),\ldots, K_n(x,a))=\varphi_2(x),
$$
where   $\varphi_1,$ and $ \varphi_2$ are   polynomials of one variable.
%Observe that if $P(F_0(x),F_1(x),\ldots, K_n(x,a))=\rm{const}$ then $\dfrac{d}{dx} P(F_0(x),F_1(x),\ldots, K_n(x,a))=0.$
We provide a method for finding such identities which  is based on the  simple observation:
if 
$$
\frac{d}{dx} P(K_0(x,a),K_1(x,a),\ldots, K_n(x,a))=0  \text{    or } \frac{d}{da} P(K_0(x,a),K_1(x,a),\ldots, K_n(x,a))=0,
$$
then  
$
P(K_0(x,a),K_1(x,a),\ldots, K_n(x,a))$ is a function only  of $a$  or    $  P(K_0(x,a),\ldots, K_n(x,a))
$
is a function of only one  variable $x$,
i.e., it is a  polynomial identity.

On the other hand, rewrite  this derivative in the form 
\begin{gather*}
\frac{d}{dx} P(K_0(x,a),K_1(x,a),\ldots, K_n(x,a))=\\=\frac{\partial }{\partial x_0}P(x_0,x_1,\ldots,x_n)\Big |_{\{x_i=F_i(x)\}} \frac{d}{dx} K_0(x,a)+\cdots +\frac{\partial }{\partial x_n}P(x_0,\ldots,x_n)\Big |_{\{x_i=F_i(x)\}} \frac{d}{dx} K_n(x,a).
\end{gather*}
Suppose that the derivative  $\dfrac{d}{dx} K_i(x,a)$ can be expressed as a polynomial of the Kravchuk polynomials, i.e.,  
$\dfrac{d}{dx} K_i(x,a)=f_i(K_0(x,a),\ldots, K_n(x,a)),$ $ i \leq n,$
and  $f_i(x_0,x_1,\ldots,x_n) \in \mathbb{Q}[x_0,x_1,\ldots,x_n],$ $ 0 \leq i \leq n.$
Then  
 
 \begin{gather*}
\frac{d}{dx} P(K_0(x,a),K_1(x,a),\ldots, K_n(x,a))=\\=\left( \frac{\partial }{\partial x_0}P(x_0,x_1,\ldots,x_n) D(x_0) +\cdots +\frac{\partial }{\partial x_n}P(x_0,x_1,\ldots,x_n)D(x_n)\right) \Big |_{\{x_i=F_i(x)\}} =\\=
\mathcal{D}_{\mathcal{K}_1}(P(x_0,x_1,\ldots,x_n))\Big |_{\{x_i=F_i(x)\}},
\end{gather*}
 where the differential operator  $\mathcal{D}_{\mathcal{K}_1}$ on $\mathbb{Q}[x_0,x_1,\ldots,x_n]$ is defined  by 
 $
 \mathcal{D}_{\mathcal{K}_1}(x_i):=f_i(x_0,x_1,\ldots,x_n).
 $
 It is clear that if $\mathcal{D}_{\mathcal{K}_1}(P(x_0,x_1,\ldots,x_n))=0$ then $\dfrac{d}{dx} P(K_0(x,a),K_1(x,a),\ldots, K_n(x,a))=0.$
 Thus, any non-trivial polynomial  $P(x_0,x_1,\ldots,x_n)$, which belongs to the kernel of   $\mathcal{D}_{\mathcal{K}_1}$, i.e., the following holds   ${ \mathcal{D}_{\mathcal{K}_1}(P(x_0,x_1,\ldots,x_n))=0}$,   defines a polynomial identity of the form  $P(K_0(x,a),\ldots, K_n(x,a))=\varphi(a)$ for some polynomial $\varphi.$

 Analogously,\, suppose that 
  $$\dfrac{d}{da} K_i(x,a)=g_i(K_0(x,a),\ldots, K_n(x,a)), g_i \in \mathbb{Q}[x_0,x_1,\ldots,x_n],$$ and define the   differential operator $\mathcal{D}_{\mathcal{K}_2}$ by $ \mathcal{D}_{\mathcal{K}_2}(x_i):=g_i(x_0,x_1,\ldots,x_n). $
  Then any  non-trivial polynomial $P(x_0,x_1,\ldots,x_n)$, which belongs to the kernel of   $\mathcal{D}_{\mathcal{K}_2}$ defines a polynomial identity of the form  $P(K_0(x,a),\ldots, K_n(x,a))=\varphi(x)$ for some polynomial $\varphi.$
 If a polynomial $P(x_0,x_1,\ldots,x_n)$ belongs to the intersection of kernels of the operators $\mathcal{D}_{\mathcal{K}_1}$ and $\mathcal{D}_{\mathcal{K}_2}$ then it defines an identity $P(K_0(x,a),\ldots, K_n(x,a))={\rm const}.$
 
For instance, it is easily verified that
$$ 
\begin{array}{ll}
\dfrac{d}{dx} K_0(x,a)=0 & \dfrac{d}{da} K_0(x,a)=0,\\ 
&\\
\dfrac{d}{dx} K_1(x,a)=- 2K_0(x,a)&  \dfrac{d}{da} K_1(x,a)=K_0(x,a),\\
&\\
\dfrac{d}{dx} K_2(x,a)=-2K_1(x,a)& \dfrac{d}{da} K_2(x,a)=-\dfrac{1}{2}K_0(x,a)+K_1(x,a).
\end{array}
$$
Then define the differential operators $\mathcal{D}_{\mathcal{K}_1}, \mathcal{D}_{\mathcal{K}_2}$ by 
$$ 
\begin{array}{ll}
\mathcal{D}_{\mathcal{K}_1}(x_0)=0 & \mathcal{D}_{\mathcal{K}_2}(x_0)=0,\\ 
\mathcal{D}_{\mathcal{K}_1}(x_1)= x_0&  \mathcal{D}_{\mathcal{K}_2}(x_1)=x_0,\\
&\\
\mathcal{D}_{\mathcal{K}_1}(x_2)=x_1& \mathcal{D}_{\mathcal{K}_2}(x_2)=-\dfrac{1}{2}x_0+x_1.
\end{array}
$$

For the polynomial  ${x_{{1}}}^{2}-2\,x_{{2}}x_{{0}}$ we have $D_{x}(x_{{1}}^{2}-2\,x_{{2}}x_{0})=0$, thus the expression  $K_{{1}}(x,a)^{2}-2\,K_{{2}}(x,a)K_{0}(x,a)$ is a function of $a$ only. The substitution $K_n(x,a)$  of $x_n$  gives
 \begin{gather*}
K_{{1}}(x,a)^{2}-2\,K_{{2}}(x,a)K_{0}(x,a)=a.
\end{gather*}

In the same manner, the polynomial $x_{{0}}x_{{1}}-{x_{{1}}}^{2}+2\,x_{{2}}x_{{0}}$ lies in the kernel of the operator $\mathcal{D}_{\mathcal{K}_2}$. Therefore  the expression $K_{{0}}(x,a)K_{{1}}(x,a)-{K_{{1}}}(x,a)^{2}+2\,K_{{2}}(x,a)K_{{0}}(x,a)$ depends on $x$  only. In fact, after simplification we obtain 
$$
K_{{0}}(x,a)K_{{1}}(x,a)-{K_{{1}}}(x,a)^{2}+2\,K_{{2}}(x,a)K_{{0}}(x,a)=-2\,x- \left( -2\,x+a \right) ^{2}+4\,{x}^{2}-4\,ax+{a}^{2}=-2x.
$$
% Finally, consider the polynomial $-{x_{{1}}}^{3}+x_{{1}}{x_{{0}}}^{2}+3\,x_{{0}}x_{{1}}x_{{2}}-3\,x_{{3}
%}{x_{{0}}}^{2}$. Пряма перевірка показує що він  лежить в ядрі обох операторів ;$\mathcal{D}_{\mathcal{K}_2}$ and $\mathcal{D}_{\mathcal{K}_1}$. Підставивши замість $x_i$ $K_i(x,a)$ знайдемо
%$$
%-{K_{{1}}}(x,a)^{3}+K_{{1}}(x,a){K_{{0}}}(x,a)^{2}+3\,K_(x,a){{0}}K_(x,a){{1}}K_(x,a){{2}}-3\,K_{{3}
%}(x,a){K_{{0}}}(x,a)^{2}=0.
%$$

A similar problem  was solved  by the present author   for  the Appellpolynomials. 
 In  the paper  \cite{B_App}  was proved that any non-trivial element of kernel of the differential operator 
 $$
\mathcal{D}=x_0 \frac{\partial}{\partial x_1}+2 x_1 \frac{\partial }{\partial x_2}+\cdots+ n x_{n-1} \frac{\partial }{\partial x_n},
$$
 gives some  polynomial identity for the Appellpolynomials.
 Recall that    polynomials   $\{A_n(x) \},$  $\deg(A_n(x))=n$ is called  the Appellpolynomials if 
\begin{gather}
A'_n(x)=n A_{n-1}(x), n=0,1,2,\ldots .
\end{gather}
%Зокрема, многочленами Аппеля є сногочлени Ейлера, Бернулі та Ерміта.
The  operator  $\mathcal{D}(x_i)=n x_{i-1}$  is called \textit{the basic Weitzenb\"ock derivation} and  its cernel is isomorphic to some algebra of $SL_2$-invariants.
%Тому для вивчення  поліноміальних тотожностей ми  виокритаємо деякі прості методи класичної теорії інваріантів.
 In the present paper we show  how the   kernel elements of the derivation $\mathcal{D}$   can be used to find a polynomials identities for the Kravchuk  polynomials.  
   A multiplicative linear map  $\psi_{AK_1}$   is called  $(\mathcal{D},\mathcal{D}_{\mathcal{K}_1})$-interwin\-ing map if the following condition holds: $\psi_{AK_1}\mathcal{D}=\mathcal{D}_{\mathcal{K}_1} \psi_{AK_1} $. Any such map induces  an isomorphism from   $\ker \mathcal{D}$  to  $\ker \mathcal{D}_{\mathcal{K}_1}.$ 
 For instance,  the discriminant of the polynomial (in the variables $X,Y$)
 
$$
x_{{0}}{X}^{3}+3\,x_{{1}}{X}^{2}Y+3\,x_{{2}}X{Y}^{2}+x_{{3}}{Y}^{3},
$$
equals
$$
\begin{vmatrix}
x_0 & 3 x_1 &3 x_2 & x_3 &0 \\
0 & x_0 & 3 x_1 &3 x_2 & x_3 \\
3x_0 & 6 x_1 &3 x_2 & 0 &0 \\
0&3x_0 & 6 x_1 &3 x_2 & 0  \\
0&0&3x_0 & 6 x_1 &3 x_2  \\
\end{vmatrix}=27 (6\,x_{{0}}x_{{3}}x_{{2}}x_{{1}}+3\,{x_{{1}
}}^{2}{x_{{2}}}^{2}-4\,{x_{{1}}}^{3}x_{{3}}-4\,{x_{{2}}}^{3}x_{{0}}-{x_{{0}}}^{2}{x_{{3}}}^{2}
),
$$
 and lies in the kernel of the operator $\mathcal{D}$. It is  well known result of the classical invariant theory. 
It is easily checked that the linear map  defined by 
\begin{align*}
&\psi_{AK_1}(x_0)=x_0,
\psi_{AK_1}(x_1)=x_1,\\
&\psi_{AK_1}(x_2)=2x_2,
\psi_{AK_1}(x_3)=-2\,x_{{1}}+6\,x_{{3}},
\end{align*}
commutes with the operators  $\mathcal{D}$ and $\mathcal{D}_{\mathcal{K}_1}$. Therefore the element
\begin{gather*}
\begin{vmatrix}
\psi_{AK_1}(x_0) & 3 \psi_{AK_1}(x_1) &3 \psi_{AK_1}(x_2) & \psi_{AK_1}(x_3) &0 \\
0 & \psi_{AK_1}(x_0) & 3\psi_{AK_1}( x_1) &3 \psi_{AK_1}(x_2) & \psi_{AK_1}(x_3) \\
3\psi_{AK_1}(x_0) & 6 \psi_{AK_1}(x_1) &3 \psi_{AK_1}(x_2) & 0 &0 \\
0&3\psi_{AK_1}(x_0) & 6 \psi_{AK_1}(x_1) &3\psi_{AK_1}( x_2) & 0  \\
0&0&3\psi_{AK_1}(x_0) & 6 \psi_{AK_1}(x_1) &3 \psi_{AK_1}(x_2)  \\
\end{vmatrix}=\\=\begin{vmatrix}
x_0 & 3 x_1 &6 x_2 & -2\,x_{{1}}+6\,x_{{3}} &0 \\
0 & x_0 & 3 x_1 &6 x_2 & -2\,x_{{1}}+6\,x_{{3}} \\
3x_0 & 6 x_1 &6 x_2 & 0 &0 \\
0&3x_0 & 6 x_1 &6 x_2 & 0  \\
0&0&3x_0 & 6 x_1 &6 x_2  
\end{vmatrix},
\end{gather*}
lies in the kernel of the operator  $\mathcal{D}_{\mathcal{K}_1}$ and defines the following identity for the Kravchuk polynomials:

\begin{gather*}
\begin{vmatrix}
K_0(x,a) & 3 K_1(x,a) &6 K_2(x,a) & -2\,K_{{1}}(x,a)+6\,K_{{3}}(x,a) &0 \\
0 & K_0(x,a) & 3 K_1(x,a) &6 K_2(x,a) & -2\,K_{{1}}(x,a)+6\,K_{{3}}(x,a) \\
3K_0(x,a) & 6 K_1(x,a) &6 K_2(x,a) & 0 &0 \\
0&3K_0(x,a) & 6 K_1(x,a) &6 K_2(x,a) & 0  \\
0&0&3K_0(x,a) & 6 K_1(x,a) &6 K_2(x,a)\end{vmatrix}=\\ \\=108a^3.
\end{gather*}

In the  paper we present methods of the theory of locally  nilpotent  derivation  to find  polynomial  identities for  the Kravchuk polynomials.

In section 2,we give  a brief introduction to the theory of locally nilpotent derivations. Also, we  introduce the notion of the Kravchuk derivations and find its kernels. In  this way we obtain  some  polynomials identities for the Kravchuk polynomials.

In the section 3 we find a $(\mathcal{D},\mathcal{D}_{\mathcal{K}_1})$-intertwining map and a $(\mathcal{D},\mathcal{D}_{\mathcal{K}_2})$-intertwining map.

%=======================================   2        ===========================

\section{Kravchuk  derivations }

%==============================================================================
\subsection{Derivations and its kernels} Let   $\mathbb{Q}[x_0,x_1,x_2,\ldots,x_n]$ be the polynomial algebra  in  $n+1$ variables $x_0,x_1,x_2,\ldots,x_n$ over $\mathbb{Q}.$ Recall  that a  {\it derivation} of the polynomial algebra $\mathbb{Q}[x_0,x_1,x_2,\ldots,x_n]$ is a linear  map  $D$ satisfying the Leibniz rule: 
$$
D(x_1 \, x_2)=D(x_1) x_2+x_1 D(x_2), \text{  for all }  x_1, x_2 \in \mathbb{Q}[x_0,x_1,x_2,\ldots,x_n].
$$
A derivation $D$  is called {\it locally nilpotent} if for every $f \in \mathbb{Q}[x_0,x_1,x_2,\ldots,x_n]$ there is an $n \in \mathbb{N}$ such that $D^n(f)=0.$ 
Any derivation   $D$ is completely determined by the elements $D(x_i).$ A  derivation   $D$  is called  \textit{linear} if  $D(x_i)$ is a linear form. A  linear locally nilpotent derivation is called a \textit{Weitzenb\"ock derivation}. 
A derivation    $D$ 
is called the triangular if  $D(x_i) \in \mathbb{Q}[x_0,\ldots,x_{i-1}].$ Any triangular derivation is locally nilpotent.

The subalgebra 
$$
\ker D:=\left \{ f \in \mathbb{Q}[x_0,x_1,x_2,\ldots,x_n] \mid  D(f)=0 \right \},
$$
is called the {\it kernel} of derivation $D.$

For arbitrary locally nilpotent derivation  $D$ the following statement holds:
\begin{theorem} \label{maint} Suppose that there exists  a polynomials $h$ such that  $D(h) \neq 0$ but $D^2(h)=0.$ Then
$$
\ker D=\mathbb{Q}[\sigma(x_0),\sigma(x_1),\ldots,\sigma(x_n)][D(h)^{-1}] \cap \mathbb{Q}[x_0,x_1,\ldots,x_n],
$$
where $\sigma$ is the Diximier map 
$$
\sigma(x_i)=\sum_{k=0}^{\infty} D^k(x_i) \frac{\lambda^k}{k!},\lambda=-\dfrac{h}{D(h)}, D(\lambda)=-1.
$$
\end{theorem}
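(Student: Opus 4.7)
The plan is to realize the Dixmier map $\sigma$ as the exponential $\exp(\lambda D)$ on a suitable localization of the polynomial algebra, and then to show that its image is exactly the kernel of $D$. The construction goes through because the assumption $D^2(h)=0$ together with $D(h)\neq 0$ supplies, after inverting $D(h)$, what is usually called a local slice for $D$.

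First, I would note that $D(h)\in\ker D$ since $D^2(h)=0$, so $\lambda=-h/D(h)$ is a well-defined element of the localization $A_{D(h)}:=\mathbb{Q}[x_0,\ldots,x_n][D(h)^{-1}]$ with $D(\lambda)=-D(h)/D(h)=-1$. Because $D(h)$ is killed by $D$, the derivation $D$ extends unambiguously to $A_{D(h)}$, and it remains locally nilpotent there: any element has the form $p/D(h)^m$ with $p\in A$, and $D^k$ applied to it equals $D^k(p)/D(h)^m$, which vanishes for $k$ large.

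Next, I would establish four properties of $\sigma$. (i) The series $\sigma(f)=\sum_k D^k(f)\lambda^k/k!$ terminates for each $f$, so $\sigma$ is well-defined as a map $A_{D(h)}\to A_{D(h)}$. (ii) $\sigma$ is a $\mathbb{Q}$-algebra homomorphism: this follows from the classical identity $\exp(\lambda D)(fg)=\exp(\lambda D)(f)\,\exp(\lambda D)(g)$, which reduces to the binomial expansion of $D^n(fg)$ via the Leibniz rule. (iii) $\sigma(A_{D(h)})\subseteq\ker D$: differentiating the defining series term by term and using $D(\lambda)=-1$, the two resulting sums telescope after a shift of index. (iv) $\sigma$ restricts to the identity on $\ker D$, since only the $k=0$ term survives when $D(f)=0$. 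Combining (iii) and (iv) yields $\ker(D|_{A_{D(h)}})=\sigma(A_{D(h)})$: the inclusion $\supseteq$ is (iii), while for $\subseteq$ every $f\in\ker D$ equals its own image $\sigma(f)$ by (iv). Since $\sigma(D(h))=D(h)$, this image is precisely $\mathbb{Q}[\sigma(x_0),\ldots,\sigma(x_n)][D(h)^{-1}]$, and intersecting with $A$ produces the stated description of $\ker D$.

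The main technical obstacle is (ii): one must show $\sigma$ is multiplicative even though $\lambda$ itself is not $D$-invariant. The cleanest route is to introduce an auxiliary central indeterminate $t$ on which $D$ acts trivially, define $\tilde\sigma(f)=\sum_k D^k(f)\,t^k/k!$ on $A[t]$, verify multiplicativity there by the Leibniz/binomial identity, and then specialize $t\mapsto\lambda$ inside $A_{D(h)}$. Once (ii) is in hand, the remaining steps amount to formal manipulations of the exponential series together with the identity $D(\lambda)=-1$.
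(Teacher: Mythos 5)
Your argument is correct: it is the standard slice-theorem proof (extend $D$ to the localization at $D(h)$, where $\lambda=-h/D(h)$ satisfies $D(\lambda)=-1$, show $\sigma=\exp(\lambda D)$ is a homomorphism fixing $\ker D$ with image in $\ker D$, and intersect back with the polynomial ring, using that the polynomial algebra is a domain so it embeds in the localization). The paper itself gives no proof of this theorem, deferring to the cited books of Nowicki and van den Essen, and your proof is essentially the argument found there, so there is nothing to correct.
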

The element $\lambda$ is called a slice of the locally nilpotent derivation $D.$
The  proof one may find   in \cite{Now} and \cite{Essen}.

%\begin{theorem} Розглянемо  два  локально нільпотентні диференціювання $D_1$, $D_2$ із слайсами $\lambda_1, \lambda_2$. Припустимо що вони  комутують між собою, тобто $[D_1,D_2]=D_1 D_2 - D_2 D_1=0.$ Тоді 
%$$
%\ker D_1 \cap \ker D_2 =\mathbb{Q}[\sigma_{\lambda_1}(\sigma_{\lambda_2}(x_0)),\sigma_{\lambda_1}(\sigma_{\lambda_2}(x_1)),\ldots, \sigma_{\lambda_1}(\sigma_{\lambda_2}(x_n))][\lambda_1^{-1}, \lambda_2^{-1}]
%$$

%\end{theorem}
\subsection{Derivatives of the Kravchuk polynomials}

The  ordinary generating function for the Kravchuk polynomials has the form 
$$
\sum_{i=0}^{\infty} K_i(x,a) z^i=\left( 1+z \right) ^{a} \left( \frac{1-z}{1+z} \right) ^{x}.
$$

Differentiating  with respect to $x$ we get the generation function for the derivative  $\dfrac{d}{dx} K_i(x,a):$
$$
\sum_{i=0}^{\infty} \frac{d}{dx} K_i(x,a) z^i=\left( 1+z \right) ^{a} \left( \frac{1-z}{1+z} \right) ^{x} \ln\left(\frac{1-z}{1+z} \right)=\left(\sum_{i=0}^{\infty} K_i(x,a) z^i \right) \ln\left(\frac{1-z}{1+z} \right).
$$
Taking into account  
\begin{gather*}
\ln\left(\frac{1-z}{1+z} \right)=\ln(1-z)-\ln(1+z)=-\sum_{i=1}^{\infty}\frac{z^i}{i}+\sum_{i=1}^{\infty}(-1)^i\frac{z^i}{i}= \sum_{i=1}^{\infty} \left(-\frac{2}{2i-1}\right)z^{2i-1}=\\=-2 \sum_{i=1}^{\infty}\frac{1-(-1)^i}{2i}z^i,
\end{gather*}
we  have 
\begin{gather*}
\frac{d}{dx} K_{n}(x,a)=[z^n]\left(\sum_{i=0}^{\infty} K_i(x,a) z^i \right) \left(-2 \sum_{i=1}^{\infty}\frac{1-(-1)^i}{2i}z^i \right)=\\=-2 \sum_{i=1}^n \frac{1-(-1)^i}{2i}K_{n-i}(x,a).
\end{gather*}
%В загальному  випадку, для  $k$-ї похідної породжуючої функції
%$$
%G(z)^{(k)}=(-1)^k G(z) \ln^k\left(\frac{1+z}{1-z} \right).
%$$
In  the paper  \cite{Kra} one may see another proof of the formula and in  \cite{Koep} is presented another expression for $\dfrac{d}{dx} K_{n}(x,a)$.

Differentiating the generating function  with respect to $a$ we get
\begin{gather*}
\sum_{i=0}^{\infty} \frac{d}{da} K_i(x,a) z^i=\left( 1+z \right) ^{a} \left( \frac{1-z}{1+z} \right) ^{x} \ln(1+z)=\left( \sum_{i=1}^{\infty}  K_i(x,a) z^i \right) \left( \sum_{i=1}^{\infty}(-1)^{i+1}\frac{z^i}{i}\right).
\end{gather*}
In  the same way,   by using that  
$$
\sum_{i=0}^{\infty} \frac{d}{da} K_i(x,a) z^i=\left(\sum_{i=0}^{\infty} K_i(x,a) z^i \right) \ln\left(1+z \right),
$$
 we obtain an  expression for the derivatives with respect to  $a:$
$$
\frac{d}{da} K_{n}(x,a)=\sum_{i=0}^{n-1} \frac{(-1)^{n+1+i}}{n-i}K_{i}(x,a).
$$

\subsection{Kravchuk derivations}
The expressions  for $\dfrac{d}{dx} K_{n}(x,a)$ and $\dfrac{d}{da} K_{n}(x,a)$ motivate the following definition  
\begin{definition}
Derivations of  $\mathbb{Q}[x_0,x_1,x_2,\ldots,x_n]$  defined by 
 \begin{align*}
&D_{\mathcal{K}_1}(x_0)=0, D_\mathcal{K}(x_n)= \sum_{i=1}^{n} \frac{1-(-1)^{i}}{2i}\, x_{n-i},\\
& D_{\mathcal{K}_2}(x_0)=0, D_{\mathcal{K}_2}(x_n)= \sum_{i=0}^{n-1} \frac{(-1)^{n+1+i}}{n-i}\, x_{i}, n=1,2,\ldots, n, \ldots.,
\end{align*}
are called  {\bf the first Kravchuk derivation}  and  {\bf the second  Kravchuk derivation}  respectively.
\end{definition}
We  have
$$
\begin{array}{ll}
D_{\mathcal{K}_1}(x_0)=0,& D_{\mathcal{K}_2}(x_0)=0,\\
D_{\mathcal{K}_1}(x_1)=x_0,&D_{\mathcal{K}_2}(x_1)=x_0,\\
 D_{\mathcal{K}_1}(x_2)=x_1,& D_{\mathcal{K}_2}(x_2)=-\dfrac12\,x_{{0}}+x_{{1}},\\
 & \\
 D_{\mathcal{K}_1}(x_3)=\dfrac13\,x_{{0}}+x_{{2}},& D_{\mathcal{K}_2}(x_3)=\dfrac 13\,x_{{0}}-\dfrac 12\,x_{{1}}+x_{{2}},\\
  & \\
 D_{\mathcal{K}_1}(x_4)=\dfrac13\,x_{{1}}+x_{{3}},& D_{\mathcal{K}_2}(x_4)=-\dfrac14\,x_{{0}}+\dfrac13\,x_{{1}}-\dfrac12\,x_{{2}}+x_{{3}},\\
  & \\
 D_{\mathcal{K}_1}(x_5)=\dfrac 15\,x_{{0}}+\dfrac13\,x_{{2}}+x_{{4}},& D_{\mathcal{K}_2}(x_5)=\dfrac15\,x_{{0}}-\dfrac14\,x_{{1}}+\dfrac13\,x_{{2}}-\dfrac12\,x_{{3}}+x_{{4}},\\
  & \\
 D_{\mathcal{K}_1}(x_6)=\dfrac15\,x_{{1}}+\dfrac 13\,x_{{3}}+x_{{5}}, & D_{\mathcal{K}_2}(x_6)=-\dfrac16\,x_{{0}}+\dfrac15\,x_{{1}}-\dfrac14\,x_{{2}}+\dfrac13\,x_{{3}}-\dfrac12\,x_{{4}}+x_{{
5}}
.
\end{array}
$$

We define  the substitution homomorphism
 $\varphi_{\mathcal{K}}:\mathbb{Q}[x_0,x_1,\ldots,x_n] \to \mathbb{Q}[x]  $  by   $\varphi_{\mathcal{K}}(x_i)=K_i(x,a)).$ 
Put
\begin{align*}
&\ker\varphi_{\mathcal{K}}:=\{ P ( x_0,x_1,...,x_n) \in  \mathbb{Q}[ x_0,x_1,...,x_n] \mid \varphi_{\mathcal{K}}(P( x_0,x_1,...,x_n)) \in \mathbb{Q} \},\\
&\ker\varphi_{\mathcal{K}_1}:=\{ P ( x_0,x_1,...,x_n) \in  \mathbb{Q}[ x_0,x_1,...,x_n] \mid \varphi_{\mathcal{K}}(P( x_0,x_1,...,x_n)) \in \mathbb{Q}[a] \},\\
&\ker\varphi_{\mathcal{K}_2}:=\{ P ( x_0,x_1,...,x_n) \in  \mathbb{Q}[ x_0,x_1,...,x_n] \mid \varphi_{\mathcal{K}}(P( x_0,x_1,...,x_n)) \in \mathbb{Q}[x] \}.
\end{align*}
Any element of $\ker \varphi_{\mathcal{K}_1}$  and $\ker \varphi_{\mathcal{K}_2}$  gives a polynomial identity for the Kravchuk polynomials respectively. Put 
\begin{gather*}
\ker \mathcal{D}_{\mathcal{K}_1}:=\{ S \in  \mathbb{Q}[ x_0,x_1,...,x_n]  \mid \mathcal{D}_{\mathcal{K}_1}(S) =0 \},\\
\ker \mathcal{D}_{\mathcal{K}_2}:=\{ S \in  \mathbb{Q}[ x_0,x_1,...,x_n]  \mid \mathcal{D}_{\mathcal{K}_2}(S) =0 \} .
\end{gather*}
It is  easy to see that   $\varphi_{\mathcal{K}}  \mathcal{D}_{\mathcal{K}_1} =\dfrac{d}{dx}\, \varphi_{\mathcal{K}}$  and $\varphi_{\mathcal{K}}  \mathcal{D}_{\mathcal{K}_2} =\dfrac{d}{da}\, \varphi_{\mathcal{K}}$.  It follows that $\varphi_{\mathcal{K}} (\ker \mathcal{D}_{\mathcal{K}_1})\subset \ker_1 \varphi_{\mathcal{K}}$  and $\varphi_{\mathcal{K}} (\ker \mathcal{D}_{\mathcal{K}_2})\subset \ker_2 \varphi_{\mathcal{K}}$.

We have thus proved the following theorem. 
\begin{theorem}\label{M-T}  Let  $P(x_0,x_1,\ldots,x_n)$ be a polynomial. 

(i) If $\mathcal{D}_{\mathcal{K}_1}(P(x_0,x_1,\ldots,x_n))=0$ then  $P(K_0(x,a),K_1(x,a),\ldots, K_n(x,a))\in \mathbb{Q}[a]$;

(ii) if  $\mathcal{D}_{\mathcal{K}_2}(P(x_0,x_1,\ldots,x_n))=0$ then $P(K_0(x,a),K_1(x,a),\ldots, K_n(x,a)) \in \mathbb{Q}[x]$.
\end{theorem}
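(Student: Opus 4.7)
The plan is to derive the statement as a direct formal consequence of the two commutation identities $\varphi_{\mathcal{K}}\circ\mathcal{D}_{\mathcal{K}_1} = \frac{d}{dx}\circ\varphi_{\mathcal{K}}$ and $\varphi_{\mathcal{K}}\circ\mathcal{D}_{\mathcal{K}_2} = \frac{d}{da}\circ\varphi_{\mathcal{K}}$ asserted in the paragraph preceding the theorem. Granted those identities, the proof collapses to one line: for (i), one applies $\varphi_{\mathcal{K}}$ to the hypothesis $\mathcal{D}_{\mathcal{K}_1}(P)=0$ to obtain
$$\frac{d}{dx}P(K_0(x,a),\ldots,K_n(x,a)) = \varphi_{\mathcal{K}}(\mathcal{D}_{\mathcal{K}_1}(P)) = 0,$$
and since a polynomial in $\mathbb{Q}[x,a]$ annihilated by $\frac{d}{dx}$ must lie in $\mathbb{Q}[a]$, the conclusion follows. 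Part (ii) is obtained by the same two-line argument with $\frac{d}{dx}$ and $\mathbb{Q}[a]$ replaced by $\frac{d}{da}$ and $\mathbb{Q}[x]$.

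The substantive task is therefore to verify the two commutation relations, and for that I would use the standard uniqueness argument for twisted derivations. Both $\varphi_{\mathcal{K}}\circ\mathcal{D}_{\mathcal{K}_j}$ and $\frac{d}{dx}\circ\varphi_{\mathcal{K}}$ (respectively $\frac{d}{da}\circ\varphi_{\mathcal{K}}$) are $\mathbb{Q}$-linear maps $\mathbb{Q}[x_0,\ldots,x_n]\to\mathbb{Q}[x,a]$ satisfying the twisted Leibniz rule
$$L(fg) = L(f)\,\varphi_{\mathcal{K}}(g) + \varphi_{\mathcal{K}}(f)\,L(g);$$
for the composition with $\mathcal{D}_{\mathcal{K}_j}$ this uses that $\mathcal{D}_{\mathcal{K}_j}$ is a derivation and $\varphi_{\mathcal{K}}$ an algebra homomorphism, and for the composition with $\frac{d}{dx}$ (or $\frac{d}{da}$) it uses the ordinary Leibniz rule on $\mathbb{Q}[x,a]$ after distributing $\varphi_{\mathcal{K}}$ across the product. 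Two maps of this kind that agree on the generators must agree on all of $\mathbb{Q}[x_0,\ldots,x_n]$, so the check reduces to evaluating both sides of each identity at each generator $x_n$.

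On generators, the identity for $\mathcal{D}_{\mathcal{K}_1}$ becomes exactly the expansion $\frac{d}{dx}K_n(x,a) = \sum_{i=1}^n \frac{1-(-1)^i}{2i}K_{n-i}(x,a)$ already established via the generating-function computation in subsection 2.2, and the identity for $\mathcal{D}_{\mathcal{K}_2}$ becomes its $a$-companion $\frac{d}{da}K_n(x,a) = \sum_{i=0}^{n-1}\frac{(-1)^{n+1+i}}{n-i}K_i(x,a)$ proved there as well. Both formulas were the whole point of Definition 2.2, so the match on generators is built into the definition and nothing further is required. I do not foresee a genuine obstacle; the only point requiring attention is the bookkeeping of overall constants, ensuring that the Kravchuk derivations reproduce $\frac{d}{dx}K_n$ and $\frac{d}{da}K_n$ up to the chosen normalization (which in any case is irrelevant for the kernel statement being proved).
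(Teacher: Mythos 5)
Your proposal is correct and follows essentially the same route as the paper: the paper's proof is precisely the observation that $\varphi_{\mathcal{K}}\mathcal{D}_{\mathcal{K}_j}$ agrees (up to the harmless normalizing constant you note) with $\frac{d}{dx}\varphi_{\mathcal{K}}$, resp.\ $\frac{d}{da}\varphi_{\mathcal{K}}$, because the derivations were defined from the derivative expansions of $K_n(x,a)$, and then $\mathcal{D}_{\mathcal{K}_j}(P)=0$ forces the composite polynomial to be constant in $x$, resp.\ in $a$. Your twisted-Leibniz/uniqueness-on-generators argument is just an explicit justification of the paper's ``it is easy to see'' step, so nothing differs in substance.
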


Note that $\varphi_{\mathcal{K}} (\ker \mathcal{D}_{\mathcal{K}_1}) \neq  \ker \varphi_{\mathcal{K}}$ and $\varphi_{\mathcal{K}} (\ker \mathcal{D}_{\mathcal{K}_2}) \neq  \ker \varphi_{\mathcal{K}}$. In fact, we  have  $$\varphi_{\mathcal{K}}(x_{{3}}{x_{{1}}}^{2}-2\,x_{{2}}x_{{3}}x_{{0}}-x_{{1}}x_{{4}}x_{{0}}-3
\,x_{{3}}{x_{{0}}}^{2}+5\,x_{{5}}{x_{{0}}}^{2}
)=0$$ but $x_{{3}}{x_{{1}}}^{2}-2\,x_{{2}}x_{{3}}x_{{0}}-x_{{1}}x_{{4}}x_{{0}}-3
\,x_{{3}}{x_{{0}}}^{2}+5\,x_{{5}}{x_{{0}}}^{2} \notin \ker \mathcal{D}_{\mathcal{K}_{1,2}}.$

%%%%%%%%%%========================================================

\subsection{The kernel of the first Kravchuk  derivation}

It is obviously that that Kravchuk  derivations are triangular and thus   locally nilpotent. Thus to find its kernels we may use  the Theorem  \ref{maint}.
%%%%%%%%%%========================================================

Let us construct  the Diximier map for the first Kravchuk  derivation. For this purpose, we derive first a close expression for  the powers $D^k_{\mathcal{K}_1}(x_n).$ 
 Differentiating $k$ times the generating function  with respect to $x$ we get
$$
\sum_{i=1}^{\infty}  K_i(x,a) z^i= \ln\left(\frac{1-z}{1+z} \right),
$$
and taking into account 
$$
\sum_{i=1}^{\infty} \frac{d}{dx} K_i(x,a) z^i=\left(\sum_{i=0}^{\infty} K_i(x,a) z^i \right) \ln\left(\frac{1-z}{1+z} \right),
$$
we get 
\begin{gather*}
\sum_{i=k}^{\infty}\frac{d^k}{dx^k} K_i(x,a) z^i=\left( \sum_{i=0}^{\infty} K_i(x,a)z^i \right) \left(\ln  \left( {\frac {1+z}{1-z}} \right)\right)^k.
%=\\=\left( \sum_{i=0}^{\infty} K_i(x,a)z^i \right) 2^k \left( \sum_{i=k}^{\infty} S^{(k)}(i) z^i \right)=2^k\sum_{n=k}^{\infty}\left( \sum_{i=0}^{n-k}   K_i(x,a) S^{(k)}(n-i) \right) z^n.
\end{gather*}

By using the expansion  
\begin{gather*}
\left(\ln\left( {\frac {1+z}{1-z}} \right) \right)^k=
%2^k \left( \sum_{\text{ i odd}} \frac{1}{i}z^i \right)^k =
\left( \sum_{i=k}^{\infty} S^{(k)}(i) z^i \right),
\end{gather*}
where 
%$$
%S^{(k)}(n)=\sum_{i_1+i_2+\cdots+i_k=a}\prod_{s=1}^k \frac{1}{i_s}, \text{ for odd $i_1,i_2,\ldots,i_k$ }.
%$$
$$
S^{(k)}(n)=\sum _{m=k}^{n}{n-1\choose m-1}{\dfrac {{2}^{m} k!}{m!}}{\it s} \left( m,k \right),
$$
and  $s(m,k)$ are the Stirling numbers of the first kind.
Then 
\begin{gather*}
\sum_{i=k}^{\infty}\frac{d^k}{dx^k} K_i(x,a) z^i=\sum_{n=k}^{\infty}\left( \sum_{i=0}^{n-k}   K_i(x,a) S^{(k)}(n-i) \right) z^n.
\end{gather*}
Thus
\begin{gather*}
\mathcal{D}^k_{\mathcal{K}_1}(x_n)=\sum_{i=0}^{n-k}   x_i S^{(k)}(n-i).
\end{gather*}
%and the ordinary generating function for the numbers $ S^{(k)}(i)$  is
%$$
%\sum_{i=0}^{\infty}S^{(k)}(i)z^i= \left( \frac{1}{2} \ln  \left( {\frac {1+z}{1-z}} \right)  \right) ^{k}.
%$$
 Now we may find the Diximier map:
\begin{gather*}
\sigma(x_n)=\sum_{k=0}^{n}D^k_{\mathcal{K}_1}(x_n) \frac{\lambda^k}{k!}
=\sum_{k=0}^{n} \frac{\lambda^k}{k!} \sum_{i=0}^{n-k}   x_i S^{(k)}(n-i)=
\sum_{i=0}^{n} x_i \sum_{k=0}^{n-i} \frac{\lambda^k}{k!} S^{(k)}(n-i).
\end{gather*}
Replacing  $\lambda$ by $-\displaystyle  \frac{x_1}{x_0} $, we obtain, after   simplifying:
\begin{gather*}
\sigma(x_n)=  \sum_{i=0}^{n} x_i \sum_{k=0}^{n-i} \frac{(-1)^k}{k!} \left(\frac{x_1}{x_0}\right)^k S^{(k)}(n-i)=\\=x_0 \frac{(-1)^n}{n!}\left(\frac{x_1}{x_0}\right)^n+x_1 \frac{(-1)^{n-1}}{(n-1)!}\left(\frac{x_1}{x_0}\right)^{n-1}+ \sum_{i=0}^{1} x_i \sum_{k=0}^{n-i-1} \frac{(-1)^k}{k!} \left(\frac{x_1}{x_0}\right)^k S^{(k)}(n-i)+\\+\sum_{i=2}^{n} x_i \sum_{k=0}^{n-i} \frac{(-1)^k}{k!} \left(\frac{x_1}{x_0}\right)^k S^{(k)}(n-i)=
\frac{(-1)^{n-1}}{n (n-2)!} \frac{x_1^n}{x_0^{n-1}}+ \sum_{i=0}^{1} x_i \sum_{k=0}^{n-i-1} \frac{(-1)^k}{k!} \left(\frac{x_1}{x_0}\right)^k S^{(k)}(n-i)+\\ +\sum_{i=2}^{n} x_i \sum_{k=0}^{n-i} \frac{(-1)^k}{k!} \left(\frac{x_1}{x_0}\right)^k S^{(k)}(n-i).
\end{gather*}
The polynomials
\begin{multline*}
C_n:= n(n-2)! x_0^{n-1}\sigma(x_n)=(-1)^{n-1}x_1^n+n(n-2)!\sum_{i=0}^{1} x_i \sum_{k=0}^{n-i-1} \frac{(-1)^k}{k!} x_0^{n-1-k} x_1^k S^{(k)}(n-i)+\\+n(n-2)!\sum_{i=2}^{n} x_i \sum_{k=0}^{n-i} \frac{(-1)^k}{k!} x_0^{n-1-k} x_1^k S^{(k)}(n-i), n>1,
\end{multline*}
belong to the kernel $\ker \mathcal{D}_{\mathcal{K}_1}.$ We call them   \textit{the Cayley elements} of the locally nilpotent derivation $\mathcal{D}_{\mathcal{K}_1}$.  
The first few Cayley elements are shown below:
\begin{align*}
&C_2=2x_2x_0-x_1^2,\\
&C_3=3\,x_{{3}}{x_{{0}}}^{2}-x_{{1}}{x_{{0}}}^{2}-3\,x_{{1}}x_{{0}}x_{{2}}+
{x_{{1}}}^{3}
,\\
&C_4=8\,x_{{4}}{x_{{0}}}^{3}-8\,x_{{1}}{x_{{0}}}^{2}x_{{3}}+4\,{x_{{1}}}^{2
}x_{{0}}x_{{2}}-{x_{{1}}}^{4}
,\\
&C_5=30\,x_{{5}}{x_{{0}}}^{4}-30\,x_{{1}}{x_{{0}}}^{3}x_{{4}}-10\,x_{{1}}{x
_{{0}}}^{3}x_{{2}}-6\,x_{{1}}{x_{{0}}}^{4}+5\,{x_{{1}}}^{3}{x_{{0}}}^{
2}+15\,{x_{{1}}}^{2}{x_{{0}}}^{2}x_{{3}}-5\,{x_{{1}}}^{3}x_{{0}}x_{{2}
}+{x_{{1}}}^{5}
,\\
&C_6=144\,x_{{6}}{x_{{0}}}^{5}+8\,{x_{{1}}}^{2}{x_{{0}}}^{4}-48\,x_{{1}}{x_
{{0}}}^{4}x_{{3}}-144\,x_{{1}}{x_{{0}}}^{4}x_{{5}}+48\,{x_{{1}}}^{2}{x
_{{0}}}^{3}x_{{2}}+72\,{x_{{1}}}^{2}{x_{{0}}}^{3}x_{{4}}-16\,{x_{{1}}}
^{4}{x_{{0}}}^{2}-\\&-24\,{x_{{1}}}^{3}{x_{{0}}}^{2}x_{{3}}+6\,{x_{{1}}}^{
4}x_{{0}}x_{{2}}-{x_{{1}}}^{6}
.
\end{align*} 

Theorem \ref{maint} implies

\begin{theorem}
\begin{align*}
%&\mathbb{Q}(x_0,x_1,\ldots,x_n)^{\mathcal{D}_{\mathcal{F}}}=\mathbb{Q}(x_0,x_1,C_3,C_4,\ldots,C_n),\\
&\ker {\mathcal{D}_{\mathcal{K}_1}}=\mathbb{Q}[x_0,x_1,C_3,C_4,\ldots,C_n][x_1^{-1}] \cap \mathbb{Q}[x_0,x_1,\ldots,x_n].
\end{align*}
\end{theorem}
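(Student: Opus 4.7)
The plan is a direct application of Theorem~\ref{maint}. First I would verify the hypothesis: $\mathcal{D}_{\mathcal{K}_1}$ is triangular and hence locally nilpotent, and the element $h := x_1$ satisfies $\mathcal{D}_{\mathcal{K}_1}(h) = x_0 \neq 0$ while $\mathcal{D}_{\mathcal{K}_1}^{2}(h) = \mathcal{D}_{\mathcal{K}_1}(x_0) = 0$. Thus Theorem~\ref{maint} applies with slice $\lambda = -x_1/x_0$, which is precisely the slice already used to construct the Cayley elements.

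Next I would feed in the Dixmier images. The closed form for $\mathcal{D}_{\mathcal{K}_1}^{k}(x_n)$ in terms of the Stirling coefficients $S^{(k)}(n-i)$, obtained above from the generating function $(\ln((1+z)/(1-z)))^k$, gives after substitution of $\lambda = -x_1/x_0$
$$
\sigma(x_0) = x_0, \qquad \sigma(x_1) = 0, \qquad \sigma(x_k) = \frac{C_k}{k(k-2)!\, x_0^{k-1}} \quad (k \geq 2).
$$
After inverting $\mathcal{D}_{\mathcal{K}_1}(h) = x_0$, the rational function $\sigma(x_k)$ and the Cayley polynomial $C_k$ are interchangeable, so the subalgebra $\mathbb{Q}[\sigma(x_0), \sigma(x_1), \ldots, \sigma(x_n)][\mathcal{D}_{\mathcal{K}_1}(h)^{-1}]$ coincides with the localization generated by $x_0, x_1$ and the Cayley elements. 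Plugging into Theorem~\ref{maint} then yields the stated description of $\ker \mathcal{D}_{\mathcal{K}_1}$ as the intersection of that localized subalgebra with the polynomial ring.

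The step requiring the most bookkeeping, and the main potential obstacle, is verifying that the substitution $\lambda = -x_1/x_0$ into the formula for $\sigma(x_n)$ really produces a denominator equal to $n(n-2)!\, x_0^{n-1}$, neither larger nor smaller, so that each $C_n$ is an honest polynomial of the displayed shape. This is not a conceptual hurdle but a careful tracking of the degrees carried by the coefficients $S^{(k)}(n-i) = \sum_{m=k}^{n-i} \binom{n-i-1}{m-1}\frac{2^{m} k!}{m!}\, s(m,k)$, following directly from the expansion of $(\ln((1+z)/(1-z)))^k$ already set up in the previous subsection.
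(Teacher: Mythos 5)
Your route is exactly the paper's: check that $\mathcal{D}_{\mathcal{K}_1}$ is triangular, hence locally nilpotent; take $h=x_1$, so $\mathcal{D}_{\mathcal{K}_1}(h)=x_0\neq 0$ and $\mathcal{D}_{\mathcal{K}_1}^2(h)=0$; build the Dixmier map with $\lambda=-x_1/x_0$ from the closed formula $\mathcal{D}^k_{\mathcal{K}_1}(x_n)=\sum_{i=0}^{n-k}x_i\,S^{(k)}(n-i)$; note $\sigma(x_0)=x_0$, $\sigma(x_1)=0$, $\sigma(x_n)=C_n/\bigl(n(n-2)!\,x_0^{n-1}\bigr)$; and invoke Theorem \ref{maint}. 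That is precisely the paper's argument (its proof is essentially the single line ``Theorem \ref{maint} implies''), so on the conceptual level there is nothing different.

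The genuine gap is your closing identification, where you assert that $\mathbb{Q}[\sigma(x_0),\ldots,\sigma(x_n)][\mathcal{D}_{\mathcal{K}_1}(h)^{-1}]$ ``coincides with the localization generated by $x_0$, $x_1$ and the Cayley elements'' and thereby reach the displayed statement. With your choice of $h$, Theorem \ref{maint} actually yields
\[
\ker\mathcal{D}_{\mathcal{K}_1}=\mathbb{Q}[x_0,C_2,C_3,\ldots,C_n][x_0^{-1}]\cap\mathbb{Q}[x_0,x_1,\ldots,x_n],
\]
i.e.\ one inverts $\mathcal{D}_{\mathcal{K}_1}(h)=x_0$, not $x_1$, and since $\sigma(x_1)=0$ the generator accompanying $x_0$ is $C_2$, while $x_1$ does not occur at all. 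The passage from this to the printed right-hand side $\mathbb{Q}[x_0,x_1,C_3,\ldots,C_n][x_1^{-1}]\cap\mathbb{Q}[x_0,\ldots,x_n]$ is not justified, and it cannot hold literally: $x_1$ belongs to that intersection, yet $\mathcal{D}_{\mathcal{K}_1}(x_1)=x_0\neq 0$, so $x_1\notin\ker\mathcal{D}_{\mathcal{K}_1}$. The printed formula looks like a misprint (read $C_2$ for $x_1$ and $x_0^{-1}$ for $x_1^{-1}$), and the paper's one-line proof does not address the point either; your write-up should prove the corrected formula or flag the discrepancy instead of silently equating the two algebras. By contrast, the issue you singled out as the main obstacle --- that the denominator of $\sigma(x_n)$ is exactly $n(n-2)!\,x_0^{n-1}$ --- is immaterial: all that Theorem \ref{maint} needs is that each $\sigma(x_n)$ lies in the localization at $x_0$, and clearing any power of $x_0$ to define the kernel element $C_n$ leaves the generated subalgebra unchanged once $x_0$ is inverted.
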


Thus, we obtain  a description of the kernel of the Kravchuk  derivation.

To get an identity for the Kravchuk polynomials we should find  $\varphi_F(C_n).$

 We  have
\begin{gather*}
\varphi_{\mathcal{K}}(C_n)=\varphi_K(\sigma(x_n))=
\sum_{i=0}^{n} K_i \sum_{k=0}^{n-i} \frac{(-1)^k}{k!}  K_1^k S^{(k)}(n-i).
\end{gather*}
By Theorem   \ref{M-T} the right side is a function of  $a$. We have 
\begin{align*}
&\varphi_{\mathcal{K}}(C_2)=-a\\
&\varphi_{\mathcal{K}}(C_3)=0\\
&\varphi_{\mathcal{K}}(C_4)=a \left( a-2 \right) \\
&\varphi_{\mathcal{K}}(C_5)=0\\
&\varphi_{\mathcal{K}}(C_6)=-3\,a \left( a-2 \right)  \left( a-4 \right) \\
\end{align*}

We would like to propose the following conjecture  for the  Kravchuk polynomials:

\noindent
\textbf{Conjecture 1.} 
\begin{gather*}
\sum_{i=0}^{n} K_i(x,a) \sum_{k=0}^{n-i} \frac{(-1)^k}{k!}  K_1(x,a)^k S^{(k)}(n-i)=\\=\left\{ \begin{array}{ll} 0, &  n  \text{ odd },\\ (-1)^m(2m-1)!!\, a (a-2)(a-4) \ldots (a-2(m-1)), & n=2m. \end{array} \right.,
\end{gather*}
where 
$$
S^{(k)}(n)=\sum _{m=k}^{n}{n-1\choose m-1}{\dfrac {{2}^{m} k!}{m!}}{\it s} \left( m,k \right).
$$

\subsection{The kernel of the second Kravchuk  derivation}

In the same way, to   derive  a close expression for  the powers $D^k_{\mathcal{K}_2}(x_n)$
we use the exponential generating function  for the Stirling numbers of the first kind $s(n,k):$
$$
\sum_{n=k}^{\infty}s(n,k) \frac{z^{i}}{n!}= \frac{\left(\ln(1+z)\right)^k}{k!}.
$$
%see \cite{AbS}, p. 824.
We have 
\begin{gather*}
\sum_{i=k}^{\infty}\frac{d^k}{da^k} K_i(x,a) z^i=\left( \sum_{i=0}^{\infty} K_i(x,a)z^i \right) \left(\ln  \left(1+z\right)\right)^k=\\=\left( \sum_{i=0}^{\infty} K_i(x,a)z^i \right)  \left( \sum_{i=k}^{\infty} \frac{k!}{n!}s(n,k) z^i \right)=\sum_{n=k}^{\infty}\left( \sum_{i=0}^{n-k}   K_i(x,a) \frac{k!}{(n-i)!}s( n-i,k)\right)z^n.
\end{gather*}
Thus
\begin{gather*}
\mathcal{D}^k_{\mathcal{K}_2}(x_n)=\sum_{i=0}^{n-k}   x_i \frac{k!}{(n-i)!} s(n,k).
\end{gather*}
Since  $D_{\mathcal{K}_2}\left(-\displaystyle  \frac{x_1}{x_0} \right)=-1$  we put $\lambda=-\displaystyle  \frac{x_1}{x_0}.$ Now we may find the Diximier map:
\begin{gather*}
\sigma(x_n)=\sum_{k=0}^{n}D^k_{\mathcal{K}_2}(x_n) \frac{\lambda^k}{k!}
=\\=\sum_{k=0}^{n} \frac{\lambda^k}{k!} \sum_{i=0}^{n-k}   x_i \frac{k!}{(n-i)!}\, s( n-i,k )=
\sum_{i=0}^{n}  x_i \sum_{k=0}^{n-i} \frac{\lambda^k}{(n-i)!} \,s( n-i,k ).
\end{gather*}
Replacing  $\lambda$ by $-\displaystyle  \frac{x_1}{x_0} $, we obtain:
\begin{gather*}
\sigma(x_n)= \sum_{i=0}^{n}  x_i \sum_{k=0}^{n-i}  \frac{(-1)^k}{(n-i)!} \frac{x_1^k}{x_0^k} \,s(n-i,k).
\end{gather*}

The first few Cayley elements are shown below:
\begin{align*}
&\sigma(x_2)=\frac12\,{\frac {x_{{1}}x_{{0}}-{x_{{1}}}^{2}+2\,x_{{2}}x_{{0}}}{x_{{0}}}},\\
&\sigma(x_3)=-\frac13\,{\frac {x_{{1}}{x_{{0}}}^{2}-{x_{{1}}}^{3}+3\,x_{{2}}x_{{1}}x_{{0
}}-3\,x_{{3}}{x_{{0}}}^{2}}{{x_{{0}}}^{2}}}
,\\
&\sigma(x_4)=\frac18\,{\frac {2\,x_{{1}}{x_{{0}}}^{3}+{x_{{1}}}^{2}{x_{{0}}}^{2}-2\,{x_
{{1}}}^{3}x_{{0}}-{x_{{1}}}^{4}+4\,x_{{2}}x_{{1}}{x_{{0}}}^{2}+4\,x_{{
2}}{x_{{1}}}^{2}x_{{0}}-8\,x_{{3}}x_{{1}}{x_{{0}}}^{2}+8\,x_{{4}}{x_{{0
}}}^{3}}{{x_{{0}}}^{3}}}
.
\end{align*} 
By using the substantial homomorphism  $\varphi_{\mathcal{K}}$ we get:
\begin{align*}
&\varphi_{\mathcal{K}}(\sigma(x_2))=\frac12\,( {K_{{1}}(x,a)K_{{0}}(x,a)-{K_{{1}}}(x,a)^{2}+2\,K_{{2}}(x,a)K_{{0}}}(x,a))=-x,\\
&\varphi_{\mathcal{K}}(\sigma(x_3))=-\frac13\,(K_{{1}}(x,a){K_{{0}}}(x,a)^{2}-{K_{{1}}}(x,a)^{3}+3\,K_{{2}}(x,a)K_{{1}}(x,a)K_{{0}}(x,a)-\\&-3\,K_{{3}}(x,a){K_{{0}}}(x,a)^{2})=0,\\
&\varphi_{\mathcal{K}}(\sigma(x_4))=\frac18\,(2\,K_{{1}}(x,a){K_{{0}}}(x,a)^{3}+{K_{{1}}}(x,a)^{2}{K_{{0}}}(x,a)^{2}-2\,{K_{{1}}}(x,a)^{3}K_{{0}}(x,a)-\\&{K_{{1}}}(x,a)^{4}+4\,K_{{2}}(x,a)K_{{1}}(x,a){K_{{0}}}(x,a)^{2}+4\,K_{{2}}(x,a){K_{{1}}}(x,a)^
{2}K_{{0}}(x,a)-\\&-8\,K_{{3}}(x,a)K_{{1}}(x,a){K_{{0}}}(x,a)^{2}+8\,K_{{4}}(x,a){K_{{0}}}(x,a)^{3})
=\frac12\, \left( x-1 \right) x,\\
&\varphi_{\mathcal{K}}(\sigma(x_5))=0,\\
&\varphi_{\mathcal{K}}(\sigma(x_6))=-\frac16\,x \left( x-1 \right)  \left( x-2 \right), \\
\end{align*}
%==============================================================

For the general case we propose the following conjecture:

\noindent
\textbf{Conjecture 2.} 
\begin{gather*}
\sum_{i=0}^{n}  K_i(x,a) \sum_{k=0}^{n-i}  \frac{(-1)^k}{(n-i)!}K_1(x,a)^k s(n-i, k) =\left\{ \begin{array}{ll} 0, &  n  \text{ odd},\\ \displaystyle  (-1)^{m} { x \choose m} , & n=2\, m . \end{array} \right.
\end{gather*}

%===========================================================================

\section{Appel-Kravchuk    intertwining maps}

%===================================================================

 \subsection{The first Kravchuk derivation}

Denote       by  $\psi_{AK_1}$  an  $(\mathcal{D},\mathcal{D}_{\mathcal{K}_1})$  intertwining map. Suppose it has the form :
$$
\psi_{AK_1}(x_0)=x_0, \psi_{AK_1}(x_n)=\sum_{i=1}^n T(n,i)x_i.
$$
After direct  calculations we obtain 
\begin{align*}
&\psi_{AK_1}(x_0)=x_0,
\psi_{AK_1}(x_1)=x_1,
\psi_{AK_1}(x_2)=2x_2,\\
&\psi_{AK_1}(x_3)=-2\,x_{{1}}+6\,x_{{3}},
\psi_{AK_1}(x_4)=-16\,x_{{2}}+24\,x_{{4}},\\
&\psi_{AK_1}(x_5)=16\,x_{{1}}-120\,x_{{3}}+120\,x_{{5}},
\psi_{AK_1}(x_6)=272\,x_{{2}}-960\,x_{{4}}+720\,x_{{6}}.
\end{align*}

Let us prove the following statement
\begin{theorem} The numbers $T(n,i)$ are the following explicit form
$$
 T(n,i)=\displaystyle \sum_{j=i}^n (-1)^{j-i} 2^{n-j} j! S(n,j)  {j-1 \choose i-1},
%(ii)& T(n,i)=i\, (T(n-1,i-1)-T(n-1,i+1)),
$$
where  $S(n,j)$ are  the  Stirling numbers of the second kind. 
\end{theorem}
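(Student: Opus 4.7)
The plan is to convert the intertwining identity into a scalar recursion for $T(n,i)$, package the solution as an exponential generating function in $n$, and recognize it in closed form. Comparing the coefficient of $x_k$ on both sides of $n\,\psi_{AK_1}(x_{n-1})=\mathcal{D}_{\mathcal{K}_1}\psi_{AK_1}(x_n)$, and using that $(1-(-1)^j)/(2j)$ equals $1/j$ for odd $j\ge 1$ and vanishes otherwise, one obtains, for every $n\ge 1$ and $k\ge 0$, the recursion
\[
n\,T(n-1,k)=\sum_{\substack{j\ge 1\\ j\ \text{odd}}}\frac{T(n,k+j)}{j}.
\]
With the conventions $T(n,i)=0$ for $i>n$ and $T(0,0)=1$ (from $\psi_{AK_1}(x_0)=x_0$), a downward induction on $i$ for fixed $n$ determines each $T(n,i)$ uniquely; in particular, the recursion at $k=n-1$ forces $T(n,n)=n!$.

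Next I pass to the exponential generating function $U_i(z):=\sum_{n\ge i}T(n,i)\,z^n/n!$. Weighting the recursion by $z^n/n!$ and summing transforms it into the functional identity
\[
z\,U_k(z)=\sum_{j\ \text{odd}}\frac{U_{k+j}(z)}{j}.
\]
The key observation is that the ansatz $U_i(z)=(\tanh z)^i$ satisfies this: the right-hand side becomes $(\tanh z)^k\sum_{j\ \text{odd}}(\tanh z)^j/j=(\tanh z)^k\operatorname{arctanh}(\tanh z)=z(\tanh z)^k$, and the leading coefficient $[z^i](\tanh z)^i=1$ matches $T(i,i)/i!=1$. Since the recursion and initial data uniquely determine the array $(T(n,i))$, we conclude $U_i(z)=(\tanh z)^i$ for every $i\ge 0$.

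To extract the closed form, substitute $u:=(e^{2z}-1)/2$ so that $\tanh z=u/(1+u)$, and expand via the negative binomial series:
\[
(\tanh z)^i=\frac{u^i}{(1+u)^i}=\sum_{j\ge i}(-1)^{j-i}\binom{j-1}{i-1}u^j.
\]
The exponential generating function of the Stirling numbers of the second kind gives $(e^z-1)^j/j!=\sum_{n\ge j}S(n,j)\,z^n/n!$, hence $u^j=j!\sum_{n\ge j}2^{n-j}S(n,j)\,z^n/n!$. Substituting and reading the coefficient of $z^n/n!$ yields the desired identity
\[
T(n,i)=\sum_{j=i}^n(-1)^{j-i}\,2^{n-j}\,j!\,S(n,j)\binom{j-1}{i-1}.
\]

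The main obstacle is recognizing the kernel $\sum_{j\ \text{odd}}t^j/j$ as $\operatorname{arctanh}(t)$, which is precisely what makes $(\tanh z)^i$ the fixed point of the system; once this is spotted, the remaining steps reduce to a standard substitution and the use of the Stirling EGF.
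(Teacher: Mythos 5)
Your proof is correct and follows essentially the same route as the paper: you derive the same recurrence $n\,T(n-1,k)=\sum_{j\ \mathrm{odd}}T(n,k+j)/j$ from the intertwining condition, exploit that $\tanh z=\dfrac{e^{2z}-1}{e^{2z}+1}$ is the compositional inverse of $\tfrac12\ln\dfrac{1+z}{1-z}$ at the level of exponential generating functions, and expand $(\tanh z)^i$ via the negative binomial series and the Stirling EGF — exactly the content of the paper's Lemma 3.1 and its application. The only differences are organizational: you solve the recursion forward and then read off the closed form (rather than verifying the stated formula), and you make explicit the uniqueness-of-solution step that the paper leaves implicit.
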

\begin{proof}
To make sure that the initial conditions hold  we calculate  $T(0,0)=1, T(1,1)=0, T(2,1)=0,T(2,2)=2$. 
By using the summation formula
$$
\sum_{i=1}^n a_i \sum_{j=1}^i b_j c_{i-j}=\sum_{i=0}^{n-1}c_i \sum_{j=i+1}^n a_j b_{j-i},
$$
we have 
\begin{gather*}
\mathcal{D}_{\mathcal{K}_1}\psi_{AK_1}(x_n))=\sum_{i=1}^n T(n,i) \mathcal{D}_{\mathcal{K}_1} (x_i)= \sum_{i=1}^n T(n,i) \sum_{j=1}^{i} \frac{1-(-1)^{j}}{2j}\, x_{i-j}=\\=\sum_{i=0}^{n-1}x_i \sum_{j=i+1}^n \frac{1-(-1)^{j-i}}{2(j-i)}T(n,j).
\end{gather*}
On the other hand
$$
D_{\mathcal{K}_1}(\psi_{AK_1}(x_n))=\psi_{AK_1}(\mathcal{D}(x_n))=n \psi_{AK_1}(x_{n-1})=n \sum_{i=1}^{n-1} T(n-1,i) x_{i}.
$$
By equation the corresponding coefficients we obtain  the recurrence relations for  the sequences  $T(n,i):$
\begin{align*}
&\sum_{j=i+1}^n \frac{1-(-1)^{j-i}}{2(j-i)}T(n,j)= n T(n-1,i),i=0,1,\ldots, n-1\\
\end{align*}
In order to prove the theorem, we need the following important lemma
\begin{lemma}
Let us consider  two sequences   $a_n$ and  $A(n,k),$ $A(n,k)=0, k>n$  with the generating functions:
$$
 \sum_{n=k}^{\infty}A(n,k)\frac{z^n}{n!}=(f(z))^k,
 \sum_{n=0}^{\infty} a_n z^k=g(z).$$
Suppose  that  $g(f(z))=z.$

Then  $$\sum_{j=i+1}^n a_{j-i} A(n,j)=\sum_{k=1}^{n-i}a_n A(n,k+i)=n A(n-1,i),i=0,1,\ldots, n-1.$$
\end{lemma}
\begin{proof}
Multiply the left  side  by $\dfrac{z^n}{n!}$ and sum on $n$  from $i$   to $\infty$ we get:
\begin{gather*}
\sum_{n=i}^{\infty}\sum_{j=i+1}^n a_{j-i} A(n,j)\frac{z^n}{n!}=\sum_{j=i+1}^\infty a_{j-i} \sum_{n=i}^{\infty}A(n,j)z^n=\sum_{j=i+1}^\infty a_{j-i} (f(z))^j=\sum_{k=1}^\infty a_k (f(z))^{i+k}=\\=(f(z))^i \sum_{k=1} a_k (f(z))^{k}=(f(z))^i g(f(z))=z (f(z))^i=z \sum_{n=0}^\infty A(n,i) \frac{z^n}{n!}=\sum_{n=1}^\infty n A(n-1,i) \frac{z^n}{n!}.
\end{gather*}
Since 
$$
\sum_{n=0}^{\infty}\sum_{j=i+1}^n a_{j-i} A(n,j)\frac{z^n}{n!}=\sum_{n=0}^{\infty} \bigl(\sum_{j=i+1}^n a_{j-i} A(n,j)  \bigr) \frac{z^n}{n!},
$$
then by equation the corresponding coefficients we get the identities
$$
\sum_{j=i+1}^n a_{j-i} A(n,j)=\sum_{k=1}^{n-i}a_n A(n,k+i)=n A(n-1,i),
$$
as required.
\end{proof}
Now, let us find the exponential generating function for  the numbers 
$$
T(n,i)=\displaystyle \sum_{j=i}^n (-1)^{j-i} 2^{n-j} j! S(n,j)  {j-1 \choose i-1}.
$$
Multiply the left and right sides  by $\dfrac{z^n}{n!}$ and sum on $n$  from $i$   to $\infty.$ Taking into account  that 
$$
\sum_{n=i}^{\infty} S(n,j)\frac{(z)^n}{n!}=\frac{(e^{z}-1)^j}{j!}
$$
 we obtain

\begin{gather*}
\sum_{n=i}^{\infty}T(n,i)\frac{z^n}{n!}=\sum_{n=i}^{\infty} \sum_{j=i}^\infty (-1)^{j-i} 2^{n-j} j!{j-1 \choose i-1} S(n,j)\frac{z^n}{n!} = \\=\sum_{j=i}^\infty \frac{(-1)^{j-i}}{ 2^{j}} j!{j-1 \choose i-1} \sum_{n=i}^{\infty} S(n,j)\frac{(2z)^n}{n!}=\sum_{j=i}^\infty \frac{(-1)^{j-i}}{ 2^{j}} {j-1 \choose i-1} (e^{2z}-1)^j.
\end{gather*}

By using the expansion
$$
\sum_{j=i}^\infty \frac{(-1)^{j-i}}{ 2^{j}} {j-1 \choose i-1} z^j=\left( \frac{z}{z+2}\right)^i,
$$
we get
$$
\sum_{n=i}^{\infty}T(n,i)\frac{z^n}{n!}=\left( \frac{e^{2z}-1}{e^{2z}+1} \right)^i.
$$

Consider the sequence 
$$
a_n =\frac{1-(-1)^n}{2n}.
$$
Its ordinary generating function has the form 
$$
g(z)= \sum_{n=1}^{\infty} a_n z^k=\frac 12 \ln\left( \frac{1+z}{1-z} \right).
$$
Put  $f(z)=\dfrac{e^{2z}-1}{e^{2z}+1}$. It is easy to see that  
 $g(f(z))=z$. Therefore, by Lemma 3.1, we get that the numbers $T(n,i)$ are solutions of the system of recurrence equations
$$
\sum_{j=i+1}^n \frac{1-(-1)^{j-i}}{2(j-i)}T(n,j)= n T(n-1,i),i=0,1,\ldots, n-1,
$$
which is what had to be proved.
\end{proof}

%Модуль цієї послідовності є A185896. Також, модуль рівний $k! A059419(n,k).$ Точну формулу уточнити.

%==================================================================

 \subsection{The second Kravchuk derivation}

%===================================================================

Denote       by  $\psi_{AK_2}$  an  $(\mathcal{D},\mathcal{D}_{\mathcal{K}_2})$  intertwining map. Suppose it has the form :
$$
\psi_{AK_2}(x_0)=x_0, \psi_{AK_2}(x_n)=\sum_{i=1}^n B(n,i)x_i, n>0
$$
We have 
\begin{align*}
&\psi_{AK_2}(x_0)=x_0,
\psi_{AK_2}(x_1)=x_1,
\psi_{AK_2}(x_2)=x_{{1}}+2\,x_{{2}},\\
&\psi_{AK_2}(x_3)=x_{{1}}+6\,x_{{2}}+6\,x_{{3}},
\psi_{AK_2}(x_4)=x_{{1}}+14\,x_{{2}}+36\,x_{{3}}+24\,x_{{4}},\\
&\psi_{AK_2}(x_5)=x_{{1}}+30\,x_{{2}}+150\,x_{{3}}+240\,x_{{4}}+120\,x_{{5}},\\
&\psi_{AK_2}(x_6)=x_{{1}}+62\,x_{{2}}+540\,x_{{3}}+1560\,x_{{4}}+1800\,x_{{5}}+720\,x_{{
6}}
.
\end{align*}
Let us prove the following statement
\begin{lemma}
$$
B(n,k)= k!\, S(n,k).
$$
\end{lemma}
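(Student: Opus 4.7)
The plan is to mirror exactly the structure of the proof of the preceding theorem: convert the intertwining relation $\psi_{AK_2}\mathcal{D}=\mathcal{D}_{\mathcal{K}_2}\psi_{AK_2}$ into a recurrence on $B(n,i)$, then apply Lemma 3.1 with the appropriate pair of generating functions.

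First I would expand both sides of $\mathcal{D}_{\mathcal{K}_2}(\psi_{AK_2}(x_n))=\psi_{AK_2}(\mathcal{D}(x_n))=n\psi_{AK_2}(x_{n-1})$. After the change of summation index $j\mapsto i-j$ the definition of $\mathcal{D}_{\mathcal{K}_2}$ rewrites as $\mathcal{D}_{\mathcal{K}_2}(x_i)=\sum_{k=1}^{i}\frac{(-1)^{k+1}}{k}x_{i-k}$. Applying the same summation-interchange identity used in the previous proof and equating coefficients of $x_i$ for $0\le i\le n-1$, I obtain the recurrence system
\begin{equation*}
\sum_{j=i+1}^{n}\frac{(-1)^{j-i+1}}{j-i}\,B(n,j)=n\,B(n-1,i),\qquad i=0,1,\ldots,n-1,
\end{equation*}
(with the convention $B(n-1,0)=0$ for $n>1$) which, together with $B(n,n)=n!$ and $B(1,1)=1$, determines $B(n,i)$ uniquely.

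Next I invoke Lemma 3.1 with the sequence $a_k=\frac{(-1)^{k+1}}{k}$, whose ordinary generating function is $g(z)=\sum_{k\ge 1}\frac{(-1)^{k+1}}{k}z^k=\ln(1+z)$. The equation $g(f(z))=z$ forces $f(z)=e^{z}-1$. It is a standard fact that the exponential generating function of $k!\,S(n,k)$ in the variable $n$ is
\begin{equation*}
\sum_{n=k}^{\infty}k!\,S(n,k)\,\frac{z^n}{n!}=(e^{z}-1)^{k}=\bigl(f(z)\bigr)^{k}.
\end{equation*}
Hence the sequence $A(n,k):=k!\,S(n,k)$ fits the hypothesis of Lemma 3.1, and the lemma yields exactly the recurrence derived above.

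Finally I would verify the initial conditions: $0!\,S(0,0)=1=B(0,0)$, $1!\,S(1,1)=1=B(1,1)$, and the triangularity $S(n,k)=0$ for $k>n$ matches $B(n,k)=0$ for $k>n$. Since the recurrence together with these initial values determines $B(n,i)$ uniquely, we conclude $B(n,k)=k!\,S(n,k)$. No step looks genuinely hard once the correct generating-function pair is identified; the only real matter of care is the bookkeeping of signs in rewriting $\mathcal{D}_{\mathcal{K}_2}(x_i)$ and the handling of the $i=0$ equation, whose right-hand side vanishes and which is automatically consistent with $\ln(1+f(z))=z$ evaluated at the constant term.
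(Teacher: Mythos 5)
Your proof is correct and follows essentially the same route as the paper: derive the recurrence $\sum_{j=i+1}^{n}\frac{(-1)^{j+1-i}}{j-i}B(n,j)=nB(n-1,i)$ from the intertwining relation, then apply Lemma 3.1 with $a_k=\frac{(-1)^{k+1}}{k}$, $g(z)=\ln(1+z)$, $f(z)=e^{z}-1$ and the exponential generating function $\sum_{n\ge k}k!\,S(n,k)\frac{z^n}{n!}=(e^{z}-1)^k$. Your explicit remarks on uniqueness of the solution of the recurrence and on the $i=0$ equation are a slight tightening of the paper's argument (which leaves these implicit), but not a different method.
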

\begin{proof}
We have $B(0,0)=1,$ $B(1,1)=0,B(2,1)=1,B(2,2)=2,$ thus  the initial conditions hold.
By using the summation formula
$$
\sum_{i=1}^n a_i \sum_{j=0}^{i-1} b_{i,j} c_j=\sum_{i=0}^{n-1} c_i \sum_{j=i+1}^{n} a_j b_{j,i},
$$
we have 
\begin{gather*}
D_{\mathcal{K}_2}\left(\psi_{AK_2}(x_n)\right)=D_{\mathcal{K}_2}\left(\sum_{i=1}^n B(n,i)x_i\right)=\sum_{i=1}^n B(n,i)\sum_{i=0}^{i-1} \frac{(-1)^{i+1-j}}{i-j}x_j=\\=
\sum_{i=0}^{n-1} x_i \sum_{j=i+1}^{n}  \frac{(-1)^{j+1-i}}{j-i} B(n,j).
\end{gather*}
On the other hand
$$
D_{\mathcal{K}_2}(\psi_{AK_2}(x_n))=\psi_{AK_2}(\mathcal{D}(x_n))=n \psi_{AK_2}(x_{n-1})=n \sum_{i=1}^{n-1} B(n-1,i) x_{i}.
$$
By equation the corresponding coefficients we obtain that the numbers $B(n,k)$ satisfy  the following system of  recurrence relations:
$$
\displaystyle \sum_{j=i+1}^n \frac{(-1)^{j+1-i}}{j-i} B(n,j)= n B(n-1,i),i=0,1,\ldots, n-1.\\
$$
%Finally, the simplification yields 
%$$
%B(n,k)=k(B(n-1,k-1)+B(n-1,k)).
%$$

To prove that the numbers $B(n,k)=k! S(n,k)$ are the solutions of the system  we use the Lemma 3.1. Under conditions of this lemma we put 
$$
a_i=\frac{(-1)^{i+1}}{i}, A(n,i)=B(n,i).
$$
Then 
$$
g(x)=\sum_{i=1}^{\infty} a_i z^n =\sum_{i=1}^{\infty} \frac{(-1)^{i+1}}{i} z^n=\ln(1+z).
$$
Also use the exponential generating function for the Stirling numbers of the second kind 
$$
\sum_{n=1}^{\infty} j! S(n,j) \frac{x^n}{n!}=(e^{x}-1)^j.
$$
It follows that  $f(z)=e^{x}-1.$  Since  $g(f(z))=z,$ then all conditions of the Lemma 3.1 hold. Thus the numbers  $B(n,i)$ are the solutions of the system of recurrence equations:
$$
\displaystyle \sum_{j=i+1}^n \frac{(-1)^{j+1-i}}{j-i} B(n,j)= n B(n-1,i),i=0,1,\ldots, n-1.
$$\end{proof}

%===================================================================

\subsection{Some identities}

%====================================================================

  Having  the explicit forms of the intertwining maps we may assist to any series of elements of kernels of the Weitzenb\"ock derivations $\mathcal{D}$ an   identity for the Kravchuk polynomials. A  list of such  series are presented in the paper  \cite{B_App}. For instance, the polynomials 
$$
I_n=\frac{1}{2}\sum_{i=0}^{2n} (-1)^i {2n \choose i} x_i x_{n-i},
$$
belong to the kernel of the derivation $\mathcal{D}$, thus $\psi_{AK_1}(I_n)=\varphi_1(a), \psi_{AK_2}(I_n)=\varphi_2(a),$ where   $\varphi_1,$ and $ \varphi_2$ are   polynomials of one variable. It is a difficult problem to find the explicit form of the polynomials $\varphi_1,$ and $ \varphi_2$.

Another  example of the element of the kernel  $\mathcal{D}$ is the determinant of the Hankel matrix
$$
H_n:=\det(\psi_{AK_1}(x_{i+j-2}))=
\begin{vmatrix} x_0 &  x_1 & x_2 &\cdots  & x_n  \\
x_1 &  x_2 & x_3 &\cdots  & x_{n+1}  \\
\hdotsfor{5}\\
 x_{n-1} &  x_{n} & x_{n-1} &\cdots  & x_{2n-1}\\
 x_n &  x_{n+1} & x_{n+2} &\cdots  & x_{2n}  
\end{vmatrix}.
$$ 
For  $\psi_{AK_1}(H_n),\psi_{AK_2}(H_n)$ we offer the conjecture:

\noindent
\textbf{Conjecture 3.}
$$
\begin{array}{ll}
(i) &\displaystyle \psi_{AK_1}(H_n)=(-1)^{\frac{n(n+1)}{2}} \prod_{i=0}^{n-1} i!  \prod_{i=0}^{n-2} (i+a)^{n-1-i},\\
(ii) & \displaystyle  \psi_{AK_2}(H_n)=(-1)^{\frac{n(n+1)}{2}} \prod_{i=0}^n 2^i i!  \prod_{i=0}^{n-2} (x-i)^{n-1-i}.%=(-1)^{\frac{n(n+1)}{2}} \prod_{i=0}^n i! 2^i ((x)_{n-1})!,
\end{array}
$$
%\textit{here}
%$$
%(x)_n=\prod_{i=0}^{n-1} (x-i),((x)_n)!:=(x)_1 (x)_2 \ldots (x)_n.
%$$

%Гiпотеза
%$$
%K(H_n)=(-1)^{\frac{n(n+1)}{2}} \prod_{i=0}^n i! 2^i \prod_{i=0}^{n-2} (x-i)^{n-1-i}=(-1)^{\frac{n(n+1)}{2}} \prod_{i=0}^n i! 2^i ((x)_{n-1})!
%$$
%Тут
%$$
%(x)_n=\prod_{i=0}^{n-1} (x-i),((x)_n)!:=(x)_1 (x)_2 \ldots (x)_n
%$$

%Ще один інваріант -- дискримінант $Discr_n$многочлена
%$$
%x_0 X^n+n x_1 X^{n-1}+\cdots+{n \choose i} x_i X^{n-i}+\cdots +x_n
%$$
%Числовий множник перед дужками є A193678.
%\subsection{Перетин  ядер диференціювань Кравчука}

%\begin{lemma} Диференціювання Кравчука  комутують між собою.
%\end{lemma}
%\section{Узагальнене диференціювання Кравчука}

\end{document}